\documentclass[12pt,a4paper]{article}

%% Used packages.
%%
\usepackage{amsmath,amsfonts,amssymb,amsthm}
\allowdisplaybreaks
\usepackage{enumitem}
\usepackage{bbold}
\usepackage{graphicx}
\usepackage{a4wide}
\usepackage{cite}
\usepackage{float}
\usepackage{authblk}

%\usepackage{showkeys}

%% Forbidding paragraphs to end at the right edge of the paper: always
%% must leave at least 20pt space at the end of a paragraph.
%%
\parfillskip=20pt plus 1fil

%% Setting \parskip=3pt to have a little space between paragraphs.
%%
\parskip=2pt

%% Make pdfLaTeX produce searchable and copyable PDF files.
%%
\expandafter\ifx\csname pdfglyphtounicode\endcsname\relax\else
\expandafter\ifx\csname pdfgentounicode\endcsname\relax\else
\pdfgentounicode=1
\input glyphtounicode.tex
\fi
\fi

%% Redefining \le, \leq, \ge and \geq to use more beautiful "slanted"
%% versions of inequalities.
%%

\renewcommand{\leq}{\leqslant}

\renewcommand{\geq}{\geqslant}

%% Correction of Big-g-s.
%% In some situations (12pt for example) the standard TeX/LaTeX versions of
%% Big-g macros work VERY VERY BAD: for example "\big(" is giving the same
%% result as just "(", etc.  There is already a correction to this pr

%% the amsmath package.
%% Here I propose my own correction to the problem.  Moreover, I add some new
%% commands (namely \bi, \bil, \bim, \bir) for normal size delimiters (very
%% useful for example for writing $\bil|x\bir|$.
%%
\makeatletter
\newcommand*{\bigcorr@macro}[2]{\sbox{0}{\mbox{$#1($}}\dimen0=\ht0
                \advance\dimen0 by \dp0
                \multiply\dimen0 by #2 \divide\dimen0 by 100}
\newcommand*{\bigcorr@big}[2]{\mbox{$#1\left#2\bigcorr@macro{#1}{85}\vrule
                   height \dimen0 depth 0pt width 0pt\right.\n@space$}}
\newcommand*{\bigcorr@Big}[2]{\mbox{$#1\left#2\bigcorr@macro{#1}{115}\vrule
                   height \dimen0 depth 0pt width 0pt\right.\n@space$}}
\newcommand*{\bigcorr@bigg}[2]{\mbox{$#1\left#2\bigcorr@macro{#1}{145}\vrule
                   height \dimen0 depth 0pt width 0pt\right.\n@space$}}
\newcommand*{\bigcorr@Bigg}[2]{\mbox{$#1\left#2\bigcorr@macro{#1}{175}\vrule
                   height \dimen0 depth 0pt width 0pt\right.\n@space$}}
\DeclareRobustCommand*{\big}[1]{{\mathpalette\bigcorr@big{#1}}}
\DeclareRobustCommand*{\Big}[1]{{\mathpalette\bigcorr@Big{#1}}}
\DeclareRobustCommand*{\bigg}[1]{{\mathpalette\bigcorr@bigg{#1}}}
\DeclareRobustCommand*{\Bigg}[1]{{\mathpalette\bigcorr@Bigg{#1}}}
\DeclareRobustCommand*{\bi}[1]{{#1}}
\DeclareRobustCommand*{\bil}[1]{\mathopen{\bi{#1}}}

\DeclareRobustCommand*{\bir}[1]{\mathclose{\bi{#1}}}
\makeatother

%% Personal theorems go here.
%%
\newtheorem{lemma}{Lemma}
\newtheorem{proposition}{Proposition}
\newtheorem{theorem}{Theorem}
%\newtheorem{definition}{Definition}
%\theoremstyle{remark}
%\newtheorem{remark}{Remark}
%\newtheorem{example}{Example}

%% Personal macros go here.
%%
\newcommand{\1}{{\mathbb{1}}}
\newcommand{\NN}{{\mathbb{N}}}
\newcommand{\RR}{{\mathbb{R}}}
\newcommand{\UU}{{\mathbb{U}}}
\newcommand{\dd}{{\mathrm{d}}}
\newcommand{\JKL}{J_\text{\rm K-L}}
\newcommand{\Ex}{{\mathbf{E}}}
\newcommand{\Pb}{{\mathbf{P}}}

%% Finally document starts here
%%
\begin{document}
\title{On misspecification in cusp-type change-point models}
\author[1]{O.V. Chernoyarov}
\author[2]{S. Dachian}
\author[3]{Yu.A. Kutoyants}
\affil[1,3]{\small National Research University ``MPEI'', Moscow, Russia,}
\affil[2]{\small University of Lille, Lille,  France,}
\affil[3]{\small Le Mans University,  Le Mans,  France}
\affil[1,3]{\small Tomsk State University, Tomsk, Russia}

\date{}
\maketitle

\begin{abstract}
The problem of parameter estimation by i.i.d.\ observations of an
inhomogeneous Poisson process is considered in situation of
misspecification. The model is that of a Poissonian signal observed in
presence of a homogeneous Poissonian noise.  The intensity function of the
process is supposed to have a cusp-type singularity at the change-point (the
unknown moment of arrival of the signal), while the supposed (theoretical) and
the real (observed) levels of the signal are different. The asymptotic
properties of pseudo MLE are described. It is shown that the estimator
converges to the value minimizing the Kullback-Leibler divergence, that the
normalized error of estimation converges to some limit distribution, and that
its polynomial moments also converge.

\end{abstract}
\noindent \textsl{MSC 2000 Classification:} 62M02, 62G10, 62G20.

\noindent \textsl{Key words:} misspecification, inhomogeneous Poisson process,
parameter estimation, cusp-type change-point.

\section{Introduction}

It is commonplace in statistics that the theoretical models do not coincide
with the real models generating the observations. The properties of the
estimators constructed on the base of the theoretical models in such
situations do not coincide with their real properties. Sometimes this
difference between models can be important, and this requires a special study.
The study of such situations in statistics was initiated in the work of
Huber~\cite{Hub67}. There is a large diversity of publications devoted to
different statistical models and different types of misspecification. Special
attention is paid to the case when the real models are close to the
theoretical models. A nice theory of robust estimation was developed in the
book of Huber~\cite{Hub04}. The large majority of publication is devoted to
regular statistical models, i.e., to the case when both the theoretical and
the real models are sufficiently smooth w.r.t.\ the unknown parameter.  The
different cases of misspecification for change-point models (with jump-type
changes) were considered as well (see, e.g., the
publications~\cite{CKT18,DK01,Pi04,SBK16,YH90} and the references
therein). Note that in the works~\cite{CKT18} and~\cite{SBK16}, the
regularities of the theoretical and of the real models are different, e.g., it
is supposed that the model is of change-point type, while the real model of
observation is regular. It is known that the maximum likelihood estimator
(MLE) under misspecification converges to the value which minimizes the
Kulback-Leibler divergence between the measure corresponding to the
observations and the parametric family of theoretical measures. Usually this
value does not coincide with the true value, but in the change-point case,
there exists a large class of models admitting consistent estimation even
under misspecification (see, e.g.,~\cite{CKLG00,DK01}).

A recently introduced class of models (\emph{cups-type change-point\/} models,
or models with \emph{cusp-type singularity\/}) can be considered as an
intermediate between regular (smooth) and change-point (discontinuous) models
in the problem of estimation of the moment of arrival of a signal
(see~\cite{CDK18,CDK20,DKKN18}).  In the case when the observations are
inhomogeneous Poisson processes, the front of the arrival of the signal in
such models corresponds to a strongly increasing continuous intensity function
with infinite Fisher information. The examples of intensity functions in
regular~(a), cusp-type change-point~(b) and change-point~(c) models are given
in Fig.~\ref{F1}.

\begin{figure}[H]
\begin{center}
\includegraphics[width=0.7\textwidth]{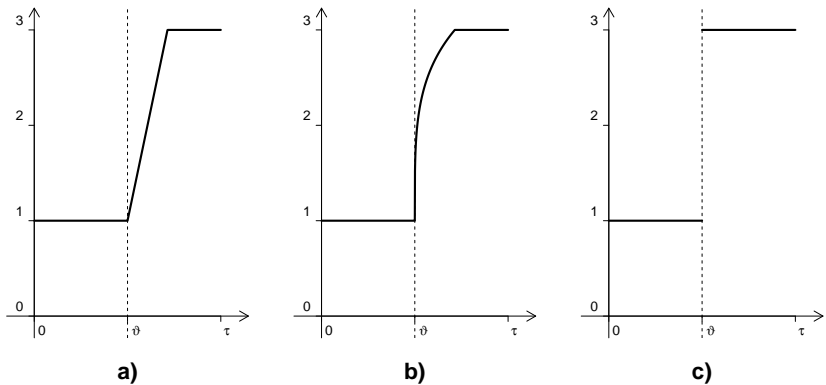}
\caption{Intensities with three types of fronts of arrival of a signal}
\label{F1}
\end{center}
\end{figure}

The mean-squared errors of the MLE $\hat\vartheta _n$ of the location
parameter $\vartheta$ by $n$ independent observations of a Poisson process
with these three types of fronts are
\[
\text{a)}\ \Ex_\vartheta\bigl(\hat\vartheta_n-\vartheta\bigr)^2 \approx
\frac{c}{n}\, ,\qquad
\text{b)}\ \Ex_\vartheta\bigl(\hat\vartheta_n-\vartheta\bigr)^2 \approx
\frac{c}{n^\gamma}\, ,\qquad
\text{c)}\ \Ex_\vartheta\bigl(\hat\vartheta_n-\vartheta\bigr)^2 \approx
\frac{c}{n^2}\, ,
\]
where $1<\gamma <2$ and $c$ are some constants (see, e.g.,~\cite{Kut22}). That
is why the cusp-type change-point models are considered as intermediate
between regular and change-point models.

It is reasonable to suppose that the real signals are continuous functions,
and that the cusp-type change-point models can provide a better fit of the
mathematical model to the real signals. An example of a discontinuous and a
close to it cusp-type intensity functions is given in Fig.~\ref{F2}.

\begin{figure}[H]
\begin{center}
\includegraphics[width=0.5\textwidth]{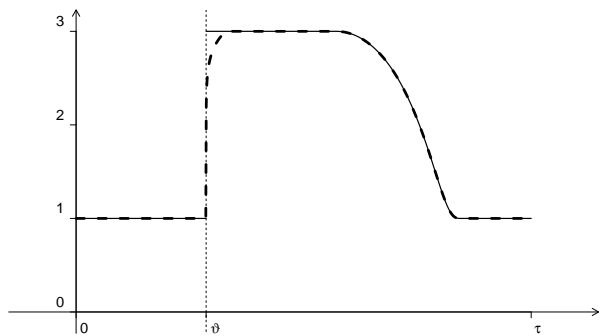}
\caption{Discontinuous (solid line) and cusp-type (dashed line) intensity
  functions}
\label{F2}
\end{center}
\end{figure}

Note that the statistical model of i.i.d.\ observations of a random variable
having a cusp-type singularity was first studied in~\cite{PR68}. Afterwards,
the parameter estimation problems for models with cusp-type singularities were
studied by many authors. For inhomogeneous Poisson processes this was done
in~\cite{D03,D11}, for diffusions with small noise in~\cite{Kut18}, for
ergodic diffusions in~\cite{DK03,Fu10}. Nonparametric estimation of a signal
with cusp-type front was considered in~\cite{Ra98}.

In this work we consider the problem of estimation of the time of arrival of a
signal having cusp-type singularity in the situation of misspecification of
the level of the signal. An example of intensity functions corresponding to
two signals (theoretical and real) with two different levels is given in
Fig.~\ref{F4}.

\begin{figure}[H]
\begin{center}
\includegraphics[width=0.5\textwidth]{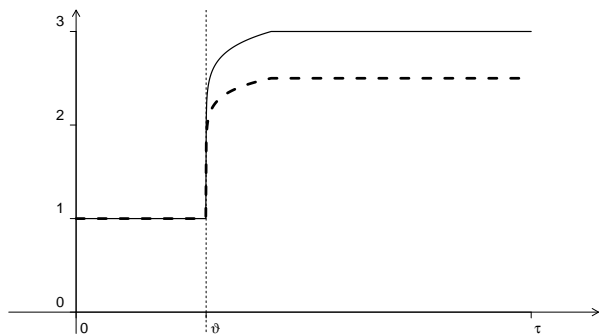}
\caption{Example of the  theoretical (dashed line)  and real  intensities.}
\label{F4}
\end{center}
\end{figure}

The main result of the paper are the asymptotic properties of the
(pseudo)~MLE.

\section{Asymptotic behavior of the pseudo MLE}

Suppose that a statistician has $n$ independent observations
$X^{(n)}=(X_1,\ldots,X_n)$, where $X_j = \bigl(X_j(t),\ 0\leq t\leq
\tau\bigr)$, for each $j=1,\ldots,n$, is an inhomogeneous Poisson process with
intensity function $\lambda_*(\vartheta,\cdot) = \bigl(\lambda_*(\vartheta
,t),\ 0\leq t\leq \tau\bigr) $, $\vartheta\in\Theta$. However, the function
$\lambda_*(\vartheta,\cdot)$ is unknown to the statistician and he uses a
different model with an intensity function $\lambda(\vartheta,\cdot) =
\bigl(\lambda(\vartheta,t),\ 0\leq t\leq \tau\bigr)$, $\vartheta\in\Theta$.

The unknown parameter $\vartheta$ is the time of arrival of a signal, the
latter being observed in presence of a homogeneous Poissonian noise of
intensity~$\lambda_0$. We consider the case when the front of the signal has a
cusp-type singularity, i.e., the statistician supposes that the intensity
function (called \emph{theoretical\/}) of the observed Poisson processes is
\begin{equation}
\label{1}
\lambda(\vartheta,t) = S\, \psi(t-\vartheta) + \lambda_0,\qquad
t\in[0,\tau],\ \vartheta\in\Theta,
\end{equation}
where the front of the signal $S\, \psi(t-\vartheta) $ is defined by the
function
\begin{equation}
\psi(x)=\Bigl(\frac{x}{\delta}\Bigr)^\kappa \1_{\{0<x<\delta\}} +
\1_{\{x\geq\delta\}},\qquad x\in\RR.
\end{equation}
The parameters $S>0$, $\lambda _0>0$ and $\kappa\in(0,1/2)$ are
supposed to be known.

However, the \emph{real\/} intensity function of the observed processes is
\begin{align}
\label{3}
\lambda_*(\vartheta_0,t) = (S+h)\, \psi(t-\vartheta_0) + \lambda_0,\qquad
t\in[0,\tau],\ \vartheta_0\in\Theta_0,
\end{align}
where $h$ is the \emph{contamination\/} of the signal.  Here we use different
sets $\Theta_0=(\alpha,\beta)$ and $\Theta=(\alpha-\delta,\beta+\delta)$ with
$\Theta_0\subset\Theta\subset(0,\tau-\delta)$.  The reason to consider in the
theoretical model a set $\Theta$ which is wider than the set $\Theta_0$ of
possible values of the parameter $\vartheta_0$ will become clear later.

The \emph{pseudo likelihood ratio\/} (p-LR) used by the statistician (see,
e.g.,~\cite{LS01}) is
\[
L\bigl(\vartheta, X^{(n)}\bigr) = \exp\Biggl\{\sum_{j=1}^{n} \int_{\vartheta}^{\tau}
\ln\bigl(\lambda(\vartheta,t)\bigr)\, \mathrm{d}X_j(t) - n \int_{\vartheta}^{\tau}
          [\lambda(\vartheta,t)-1]\, \mathrm{d}t\Biggr\}, \qquad
          \vartheta\in\Theta.
\]
We use the word ``pseudo'' since the intensity of the processes $X_j$,
$j=1,\ldots,n$, is not~$\lambda(\vartheta_0,\cdot)$, but
$\lambda_*(\vartheta_0,\cdot)$.

The \emph{pseudo maximum likelihood estimator\/} (p-MLE) $\hat\vartheta_n$ is
defined by the equation
\[
L\bigl(\hat\vartheta_n,X^{(n)}\bigr) = \sup_{\vartheta\in\Theta}
L\bigl(\vartheta, X^{(n)}\bigr).
\]

As it is usually the case in misspecified problems, the limit of the p-MLE
$\hat\vartheta_n$ will be given by the value~$\hat\vartheta $ which minimizes
the Kullback-Leibler divergence
\[
\JKL(\vartheta)=\int_{\vartheta\wedge\vartheta_0}^{\tau}
\biggl[\frac{\lambda(\vartheta,t)}{\lambda_*(\vartheta_0,t)} - 1 -
  \ln\biggl(\frac{\lambda(\vartheta,t)}{\lambda_*(\vartheta_0,t)}\biggr)\biggr]
\lambda_*(\vartheta_0,t)\, \mathrm{d}t, \qquad \vartheta\in\Theta.
\]

We need to introduce several notations. First, we introduce the random process
\[
\hat Z(u) = \exp\biggl(W^H(u)-\frac{u^2}{2}\biggr),\qquad u\in \RR,
\]
where $H=\kappa +1/2$, and $W^H(\cdot)$ is a two-sided fractional Brownian
motion (fBm) with Hurst parameter~$H$, i.e., a centered Gaussian process with
covariance
\[
\Ex \Bigl(W^H(u_1)W^H(u_2)\Bigr) = \frac{1}{2} \Bigl[\bil|u_1\bir|^{2H} +
  \bil|u_2\bir|^{2H} - \bil|u_1-u_2\bir|^{2H}\Bigr], \qquad u_1,u_2\in\RR.
\]
Let us recall here that $W^H$ admits the representations
\[
W^H(u) = \Gamma_\kappa^{-1} \int_{-\infty}^{+\infty} \bigl[(v-u)_+^\kappa -
  v_+^\kappa\bigr]\, \dd W(v) = \Gamma_\kappa^{-1} \int_{-\infty}^{+\infty}
\bigl[(u-s)_+^\kappa - (-s)_+^\kappa\bigr]\, \dd\widetilde W(s),
\]
where $W(\cdot)$ and $\widetilde W(\cdot)$ are two-sided Wiener processes
(Brownian motions).

Further, we introduce the random variable $\hat u_\kappa$ which is the (almost
surely unique) solution of the equation
\[
\hat Z(\hat u_\kappa) = \sup_{u\in \RR}\hat Z(u).
\]

Finally, we introduce the constants
\[
\Gamma _\kappa =\int_{\RR }^{}\bigl[(v-u)_+^\kappa - (v)_+^\kappa\bigr]^2\,
\dd v \quad\text{and}\quad
b=\left(\frac{S\,\Gamma_\kappa\,\sqrt{\lambda_*(\vartheta_0,\hat\vartheta)}}
{\lambda_0\,\delta^\kappa\,\JKL''(\hat\vartheta)}\right)^{\frac{2}{3-2\kappa}},
\]
as well as the set 
\begin{equation}
\label{4}
\mathcal{H}=\biggl\{h \;:\; h > \frac{S}{\ln\bigl(1+\frac{S}{\lambda_0}\bigr)}
- S - \lambda_0\biggr\}
\end{equation}

Note that the explicit expression of $\JKL''(\hat\vartheta)$ is given in
Proposition~\ref{P1} below, where it is equally shown that if the
contamination $h\in\mathcal{H}$, then the Kullback-Leibler divergence has a
unique minimum at some point $\hat\vartheta\in\Theta$.

Note also that the condition $h\in {\cal H}$  can be rewritten as
\begin{align*}
S+h+\lambda_0 >
\frac{(S+\lambda_0)-\lambda_0}{\ln(S+\lambda_0)-\ln(\lambda_0)}\, ,
\end{align*}
and so it coincides with the condition providing the consistent estimation in
a similar misspecified problem for the change-point case~\cite{Kut22}.

Note finally, that the right hand side of the inequality in~\eqref{4} belongs
to the interval $(-S,0)$ (this follows immediately from the elementary
inequalities $\ln(x) < x-1$ and $\ln(x) > 1-\frac{1}{x}$ for $x\ne 1$), and so
the contamination can be positive or negative.  For the case $\lambda_0=1$ (we
can always reduce ourselves to this case dividing $h$ and $S$ by $\lambda_0$),
the region of admissible values of $h$ (as function of $S$) is represented in
Fig.~\ref{h-region}.

\begin{figure}[H]
\begin{center}
\includegraphics[width=0.4\textwidth]{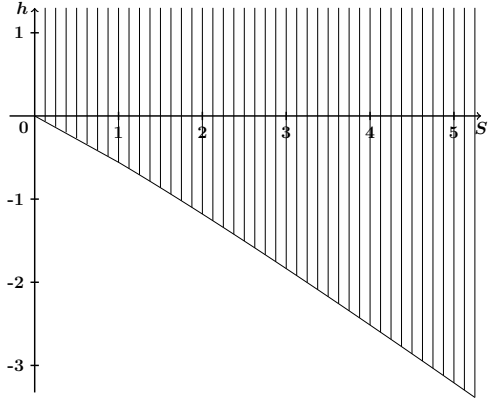}
\caption{Possible values of the contamination~$h$}
\label{h-region}
\end{center}
\end{figure}

Now we can state the main result of the present paper.

\begin{theorem}
\label{T1}
Suppose that we have the model\/~\textup{\eqref{1}--\eqref{3}} and\/
$h\in\mathcal{H}\setminus\{0\}$.  Then the p-MLE\/~$\hat\vartheta _n$ is
``consistent'':\/ $\hat\vartheta_n \stackrel{\Pb}{\longrightarrow}
\hat\vartheta $, converges in distribution:
\begin{equation}
\label{mle}
n^{\frac{1}{3-2\kappa}}\; b^{-1}\, \bigl(\hat\vartheta_n - \hat\vartheta\bigr)
\Longrightarrow \hat u_\kappa,
\end{equation}
and we have the convergence of polynomial moments: for any\/ $p>0$, it holds
\[
n^{\frac{p}{3-2\kappa }}\; \Ex \bigl|\hat\vartheta_n - \hat\vartheta\bigr|^p
\longrightarrow b^p\, \Ex \bigl|\hat u_\kappa\bigr|^p.
\]
\end{theorem}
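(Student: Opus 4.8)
The plan is to follow the Ibragimov--Khasminskii scheme in the form developed by Kutoyants for inhomogeneous Poisson processes. Put $\varphi_n = b\,n^{-1/(3-2\kappa)}$ and introduce the local parameter $u$ via $\vartheta = \hat\vartheta + \varphi_n u$; the basic object is the normalized likelihood ratio field
\[
Z_n(u) = \frac{L\bigl(\hat\vartheta + \varphi_n u,\,X^{(n)}\bigr)}{L\bigl(\hat\vartheta,\,X^{(n)}\bigr)},
\qquad \hat\vartheta + \varphi_n u\in\Theta .
\]
Because the p-MLE maximizes $L$, its normalized error $\hat u_n = b^{-1}n^{1/(3-2\kappa)}\bigl(\hat\vartheta_n-\hat\vartheta\bigr)$ is precisely the point of maximum of $Z_n$, so \eqref{mle} and the moment convergence are exactly the convergence in law, together with uniform integrability, of $\arg\max_u Z_n(u)$. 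I would reduce everything to three facts about this field: its weak limit, a bound on its local oscillation, and a uniform estimate of its decay for large $|u|$.

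First I would split the log-field. Writing $\dd X_j = \lambda_*(\vartheta_0,t)\,\dd t + \dd\pi_j$ with $\pi_j$ the centered (martingale) measures of the observed processes,
\[
\ln Z_n(u) = \sum_{j=1}^n \int \ln\frac{\lambda(\hat\vartheta+\varphi_n u,t)}{\lambda(\hat\vartheta,t)}\,\dd\pi_j(t) \;-\; n\bigl[\JKL(\hat\vartheta+\varphi_n u)-\JKL(\hat\vartheta)\bigr].
\]
For the deterministic part I would use that, since $h\in\mathcal H\setminus\{0\}$, Proposition~\ref{P1} gives a unique minimum of $\JKL$ at $\hat\vartheta$ with $\JKL'(\hat\vartheta)=0$ and a \emph{finite} $\JKL''(\hat\vartheta)>0$. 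The finiteness is the decisive point: $h\neq0$ forces $\hat\vartheta\neq\vartheta_0$, so the theoretical cusp at $\hat\vartheta$ does not collide with the real one at $\vartheta_0$ and $\JKL$ is smooth at $\hat\vartheta$; hence this part equals $-\tfrac12\JKL''(\hat\vartheta)\,n\varphi_n^2\,u^2$ up to lower order terms, a genuinely \emph{quadratic} drift (this is what replaces the $|u|^{2H}$ drift of the well specified case and produces the $-u^2/2$ in $\hat Z$). For the stochastic part I would localize at the cusp by the substitution $t=\hat\vartheta+\varphi_n s$ and $\ln(1+x)=x+o(x)$, turning the integrand into $\tfrac{S\varphi_n^\kappa}{\lambda_0\delta^\kappa}\bigl[(s-u)_+^\kappa-s_+^\kappa\bigr]$ up to negligible terms; its limiting covariance, expressed through $\Gamma_\kappa$ and $\sqrt{\lambda_*(\vartheta_0,\hat\vartheta)}$, is exactly that of the fBm $W^H$, $H=\kappa+\tfrac12$. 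Both parts grow at the common rate $n^{(1-2\kappa)/(3-2\kappa)}$: equating the order $n\varphi_n^2$ of the quadratic drift with the order $(n\varphi_n^{1+2\kappa})^{1/2}$ of the fBm fluctuations forces the exponent $3-2\kappa$, and $b$ is chosen (using $W^H(cu)\stackrel{d}{=}c^H W^H(u)$) so that the renormalized log-field $n^{-(1-2\kappa)/(3-2\kappa)}\ln Z_n(u)$ converges to $W^H(u)-\tfrac{u^2}{2}=\ln\hat Z(u)$. Since the maximizer is unchanged by this positive renormalization, $\arg\max_u Z_n\Longrightarrow\hat u_\kappa$.

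To turn this into the stated convergences I would verify the two uniform estimates underlying the Ibragimov--Khasminskii theorem, stated now for the renormalized log-field $Y_n(u)=n^{-(1-2\kappa)/(3-2\kappa)}\ln Z_n(u)$. The first is a Hölder bound $\Ex\bigl|Y_n(u_1)-Y_n(u_2)\bigr|^2\le C\,|u_1-u_2|^{1+2\kappa}$, coming directly from the cusp computation and giving tightness on compacts (the exponent $1+2\kappa>1$ suffices by Kolmogorov's criterion). The second is a confinement estimate showing that for large $|u|$ the quadratic drift dominates the fBm fluctuation, e.g.\ an inequality $\Ex\sup_{R\le|u|\le R+1}Y_n(u)\le -c\,R^2+C$ uniform in $n$; this localizes $\arg\max Y_n$ and yields a uniform tail bound $\Pb\bigl(|\hat u_n|>R\bigr)\le C\exp(-c\,R^{\gamma})$ with some $\gamma>0$, which supplies the uniform integrability of $|\hat u_n|^p$ for every $p>0$. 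With the weak convergence of the field and these two estimates, the general theorem of Kutoyants delivers simultaneously the consistency $\hat\vartheta_n\stackrel{\Pb}{\longrightarrow}\hat\vartheta$, the convergence \eqref{mle}, and the convergence of all polynomial moments.

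The main obstacle will be the weak convergence of the stochastic part to $W^H$. One must justify the cusp localization uniformly and, crucially, check that the jumps of the compensated Poisson integrals are asymptotically negligible, so that the a~priori non-Gaussian limit is in fact the Gaussian process $W^H$; this is the technically heaviest step and the source of the constant $\Gamma_\kappa$ in $b$. A second, subtler point is the confinement estimate: since the noise is only Hölder of order $H\in(\tfrac12,1)$ while the restraining drift is merely quadratic, proving uniformly in $n$ that the rough fluctuations cannot overcome the drift requires care, and it is exactly this bound that underpins the convergence of the polynomial moments.
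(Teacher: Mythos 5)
Your proposal is correct and follows essentially the same route as the paper: the Ibragimov--Khasminskii scheme with the double normalization ($\varphi_n=b\,n^{-1/(3-2\kappa)}$ plus the exponent $\varepsilon_n\asymp n^{-(1-2\kappa)/(3-2\kappa)}$), the decomposition of the log-field into a martingale part converging to $W^H$ and a deterministic part giving the quadratic drift $-u^2/2$ via $\JKL'(\hat\vartheta)=0$ and the finiteness of $\JKL''(\hat\vartheta)$ for $h\ne 0$. The only cosmetic difference is that the paper states the tightness and tail conditions as the classical IK bounds on $\hat Z_n^{1/2}$ (Lemmas~\ref{L2} and~\ref{L3}) rather than as a H\"older bound and a confinement estimate for the renormalized log-field, but these play the same role.
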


\begin{proof}
To prove this theorem we use the approach developed by Ibragimov and
Khasminskii in~\cite{IH81}, which is based on the weak convergence of the
normalized likelihood ratio process to some limit process. The particularity
of the misspecified models concerns the study of the corresponding random
functions, which are not true likelihood ratios, and hence the direct
application of Theorem 1.10.1 of~\cite{IH81} is often impossible. In our case,
we follow the modification of this method introduced in~\cite{CKT18,CDK18}
(see as well~\cite{Kut22}). Let us explain how it works.

We put
\[
\varphi_n = b n^{-\frac{1}{3-2\kappa}} \longrightarrow 0
\]
and introduce the normalized p-LR
\[
Z_n(u)=\frac{L\bigl(\hat\vartheta+\varphi_n u,
  X^{(n)}\bigr)}{L\bigl(\hat\vartheta, X^{(n)}\bigr)},\qquad u\in\UU_n =
\biggl(\frac{\alpha-\delta-\hat\vartheta}{\varphi_n},
\frac{\beta+\delta-\hat\vartheta}{\varphi_n}\biggr).
\]

Note that since $\hat\vartheta\in\Theta$, we have $\UU_n\uparrow\RR$.  For any
fixed $u\not=0$, we have $Z_n(u) \longrightarrow \infty$, that is why we
introduce a second normalization: we put
\[
\varepsilon_n= \frac{1}{b^2\JKL''(\hat\vartheta)}\,
n^{-\frac{1-2\kappa}{3-2\kappa}} \longrightarrow 0 \quad\text{and}\quad \hat
Z_n(u)=\bigl[Z_n(u)\bigr]^{\varepsilon_n}
\]

Then we show the following three lemmas (the proofs are in the next section).

\begin{lemma}
\label{L1}
Under the hypotheses of Theorem~\ref{T1}, the finite dimensional distributions
of the process\/~$\hat Z_n(\cdot)$ converge to those of the process\/~$\hat
Z(\cdot)$.
\end{lemma}

\begin{lemma}
\label{L2}
Under the hypotheses of Theorem~\ref{T1}, there exist some constants\/ $c,C>0$
such that
\begin{equation}
\label{ineqL2}
\Ex \hat Z_n^{1/2}(u) \leq C\exp\bigl\{-c u^2 \bigr\}
\end{equation}
for all $n\in\NN$ and $u\in\UU_n$.
\end{lemma}

\begin{lemma}
\label{L3}
Under the hypotheses of Theorem~\ref{T1}, there exist some constants\/ $C>0$
and\/ $\gamma>1$ such that
\[
\Ex \bigl[\hat Z_n^{1/2}(u_1) - \hat Z_n^{1/2}(u_2)\bigr]^2 \leq C\, \bil|u_1 -
u_2\bir|^\gamma
\]
for all\/ $n\in\NN$ and\/ $u_1,u_2\in\UU_n$.
\end{lemma}

The properties of the normalized p-LR $ \hat Z_n(\cdot)$ established in the
Lemmas \ref{L1}-\ref{L3} allow us to apply a modification of Theorem 1.10.1 in
\cite{IH81} and obtain all the desired properties of the p-MLE.  For example,
the convergence~\eqref{mle} is proved as follows. Using the change of variable
$\vartheta = \hat\vartheta +\varphi_nu$, we can write
\begin{align*}
\Pb\Biggl(\frac{\hat\vartheta_n-\hat\vartheta }{\varphi_n} < x\Biggr) &=
\Pb\bigl(\hat\vartheta_n < \hat\vartheta+\varphi_n x\bigr) =
\Pb\Biggl(\sup_{\vartheta < \hat\vartheta+\varphi_n x}
L\bigl(\vartheta,X^{(n)}\bigr) > \sup_{\vartheta \geq \hat\vartheta+\varphi_n
  x} L\bigl(\vartheta,X^{(n)}\bigr)\Biggr)\\
&= \Pb\Biggl(\sup_{\vartheta < \hat\vartheta+\varphi_n x}
\frac{L\bigl(\vartheta,X^{(n)}\bigr)}{L\bigl(\hat\vartheta,X^{(n)}\bigr)} >
\sup_{\vartheta \geq \hat\vartheta+\varphi_n x}
\frac{L\bigl(\vartheta,X^{(n)}\bigr)}{L\bigl(\hat\vartheta,X^{(n)}\bigr)}\Biggr)\\
&= \Pb\biggl(\sup_{u < x} Z_n(u) > \sup_{u\geq x} Z_n(u)\biggr) =
\Pb\biggl(\sup_{u < x} \hat Z_n(u) > \sup_{u\geq x} \hat Z_n(u)\biggr)\\
&\longrightarrow \Pb\biggl(\sup_{u < x} \hat Z(u) > \sup_{u\geq x} \hat
Z(u)\biggr) = \Pb\bigl(\hat u_\kappa < x\bigr),
\end{align*}
as soon as
\[
\Pb\biggl(\sup_{u < x} \hat Z(u) = \sup_{u\geq x} \hat Z(u)\biggr) = 0,
\]
that is, as soon as $\Pb\bigl(\hat u_\kappa = x\bigr)=0$.
\end{proof}

\section{Proof of the lemmas}

We start with the following proposition which gives some properties of the
Kullback-Leibler divergence $\JKL(\cdot)$ in our case.

\begin{proposition}
\label{P1}
Suppose that\/ $h\in\mathcal{H}$. Then the function\/ $\JKL(\cdot)$ has the
following properties.
\begin{enumerate}[label={${\it\roman*})$}]
\item\label{L0i} The function\/ $\JKL(\cdot)$ is continuously differentiable,
  and
\[
\JKL'(\vartheta) = \begin{cases}
\vphantom{\Big(}\lambda_0\ln\bigl(1+\frac{S}{\lambda_0}\bigr) - S, &
\text{if\/~} \vartheta\leq\vartheta_0-\delta,\\
\vphantom{\bigg(}\lambda_0 \ln\Bigl(1+\frac{S}{\lambda_0}
\bigl(\frac{\vartheta_0-\vartheta}{\delta}\bigr)^\kappa\Bigr) - S
\bigl(\frac{\vartheta_0-\vartheta}{\delta }\bigr)^\kappa + I_1(\vartheta), &
\text{if\/~} \vartheta\in[\vartheta_0-\delta,\vartheta_0],\\
\vphantom{\bigg(}\lambda_+ \ln\Bigl(\frac{S+\lambda_0}{S
  \bigl(1-\frac{\vartheta_0-\vartheta}{\delta }\bigr)^\kappa+\lambda_0}\Bigr)
+ S \bigl(1-\frac{\vartheta_0-\vartheta}{\delta }\bigr)^\kappa -S +
I_2(\vartheta), & \text{if\/~} \vartheta\in[\vartheta_0,\vartheta_0+\delta],\\
\vphantom{\Big(}\lambda_+ \ln\bigl(1+\frac{S}{\lambda_0}\bigr) - S, &
\text{\/if~}\vartheta\geq\vartheta_0+\delta,
\end{cases}
\]
where\/ $\lambda_+=S+h+\lambda_0$ and
\begin{align*}
I_1(\vartheta) &= \int_{\frac{\vartheta_0-\vartheta}{\delta}}^{1} S \kappa x^{\kappa-1}\,
\frac{(S+h) \bigl(x-\frac{\vartheta_0-\vartheta}{\delta}\bigr)^\kappa - S
  x^\kappa}{S x^\kappa + \lambda_0}\;\mathrm{d}x,\\
I_2(\vartheta) &= \int_{0}^{1-\frac{\vartheta-\vartheta_0}{\delta}} S \kappa x^{\kappa-1}\,
\frac{(S+h) \bigl(x+\frac{\vartheta-\vartheta_0}{\delta}\bigr)^\kappa - S
  x^\kappa}{S x^\kappa + \lambda_0}\;\mathrm{d}x.
\end{align*}

In particular,
\begin{itemize}[label={--}]
\item $\JKL'(\vartheta) < 0$ for\/
  $\vartheta\leq\vartheta_0-\delta$,
\item $\JKL'(\vartheta) > 0$ for\/
  $\vartheta\geq\vartheta_0+\delta$,
\item $\JKL'(\vartheta_0) = Ah$ with\/
  $A=1-\frac{\lambda_0}{S}\ln\bigl(1+\frac{S}{\lambda_0}\bigr) > 0$.
\end{itemize}
\item\label{L0ii} The function\/ $\JKL(\cdot)$ is twice continuously
  differentiable everywhere except at the point\/~$\vartheta_0$ (in which\/
  $\JKL''(\vartheta_0)=+\infty$), and
\[
\JKL''(\vartheta) = \begin{cases}
\displaystyle\vphantom{\int_\int^\int}\frac{S (S+h) \kappa^2}{\delta}
\int_{\frac{\vartheta_0-\vartheta}{\delta}}^{1} \frac{x^{\kappa-1}
  \bigl(x-\frac{\vartheta_0-\vartheta}{\delta}\bigr)^{\kappa-1}}{S x^\kappa +
  \lambda_0}\; \mathrm{d}x, & \text{if\/~}
\vartheta\in[\vartheta_0-\delta,\vartheta_0),\\
\displaystyle\vphantom{\int_\int^\int}\frac{S (S+h) \kappa^2}{\delta}
\int_{0}^{1-\frac{\vartheta-\vartheta_0}{\delta}} \frac{x^{\kappa-1}
  \bigl(x+\frac{\vartheta-\vartheta_0}{\delta}\bigr)^{\kappa-1}}{S x^\kappa +
  \lambda_0}\; \mathrm{d}x, & \text{\/if~}
\vartheta\in(\vartheta_0,\vartheta_0+\delta],\\
0, & \text{\/if~} \vartheta\notin(\vartheta_0-\delta,\vartheta_0+\delta).\\
\end{cases}
\]

In particular, $\JKL''(\cdot)>0$ on\/
$(\vartheta_0-\delta,\vartheta_0)\cup(\vartheta_0,\vartheta_0+\delta)$, and
hence the function\/ $\JKL'(\cdot)$ is strictly increasing on\/
$(\vartheta_0-\delta,\vartheta_0+\delta)$.
\item\label{L0iii} The function\/ $\JKL(\cdot)$ attains its unique minimum at
  the point\/ $\hat\vartheta$ which is the (unique) solution of the equation\/
  $\JKL'(\hat\vartheta)=0$.  Moreover, if\/ $h>0$, we have\/
  $\hat\vartheta\in(\vartheta_0-\delta,\vartheta_0)$, and if\/ $h<0$, we
  have\/ $\hat\vartheta\in(\vartheta_0,\vartheta_0+\delta)$ (of course\/
  $\hat\vartheta=\vartheta_0$ for $h=0$).
\item\label{L0iv} If\/ $h\ne 0$, there exist some constants\/ $m,M>0$ such that we
  have (for all\/ $\vartheta\in\Theta$) the estimates
\begin{equation}
\label{borneJ'}
m \bil|\vartheta-\hat\vartheta\bir| \leq \bil|\JKL'(\vartheta)\bir| \leq M
\bil|\vartheta-\hat\vartheta\bir|,
\end{equation}
and consequently
\begin{equation}
\label{borneJ}
\frac{m}{2}\, (\vartheta-\hat\vartheta)^2 \leq
\JKL(\vartheta)-\JKL(\hat\vartheta) \leq \frac{M}{2}\,
(\vartheta-\hat\vartheta)^2.
\end{equation}
\end{enumerate}
\end{proposition}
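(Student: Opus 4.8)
The plan is to derive everything from a single workable representation of $\JKL'$. For part~\ref{L0i} I would first rewrite the integrand of $\JKL$ as $\lambda(\vartheta,t)-\lambda_*(\vartheta_0,t)-\lambda_*(\vartheta_0,t)\ln\bigl(\lambda(\vartheta,t)/\lambda_*(\vartheta_0,t)\bigr)$ and differentiate under the integral by the Leibniz rule. The boundary term from the variable lower limit $\vartheta\wedge\vartheta_0$ vanishes in both regimes: for $\vartheta>\vartheta_0$ the limit is the constant $\vartheta_0$, while for $\vartheta<\vartheta_0$ one has $\lambda(\vartheta,\vartheta)=\lambda_*(\vartheta_0,\vartheta)=\lambda_0$, so the integrand itself is zero at $t=\vartheta$. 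Since $\partial_\vartheta\lambda(\vartheta,t)=-S\tfrac{\kappa}{\delta}\bigl(\tfrac{t-\vartheta}{\delta}\bigr)^{\kappa-1}\1_{\{\vartheta<t<\vartheta+\delta\}}$, this gives
\[
\JKL'(\vartheta)=\int_{\vartheta}^{\vartheta+\delta}\partial_\vartheta\lambda(\vartheta,t)\,\frac{\lambda(\vartheta,t)-\lambda_*(\vartheta_0,t)}{\lambda(\vartheta,t)}\,\dd t,
\]
an integral supported on $(\vartheta,\vartheta+\delta)$.

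Substituting $x=(t-\vartheta)/\delta$, I would split this support according to the position of $\vartheta_0$ and $\vartheta_0+\delta$, producing the four cases of~\ref{L0i}. On each piece $\lambda_*$ equals $\lambda_0$, the cusp $(S+h)(x-w/\delta)^\kappa+\lambda_0$ with $w=\vartheta_0-\vartheta$, or $\lambda_+$; the parts where $\lambda_*$ is constant integrate explicitly via $u=Sx^\kappa+\lambda_0$ (yielding the logarithmic and power terms), while the overlapping parts reproduce $I_1$ and $I_2$. Matching the pieces at $\vartheta_0-\delta$, $\vartheta_0$, $\vartheta_0+\delta$ gives continuity, hence $C^1$; at $\vartheta_0$ both one-sided limits equal $\int_0^1 S\kappa x^{\kappa-1}\tfrac{hx^\kappa}{Sx^\kappa+\lambda_0}\,\dd x=Ah$, again computed by $u=Sx^\kappa+\lambda_0$, with $A>0$ from $\ln(1+r)<r$. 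The sign statements at $\vartheta_0\mp\delta$ follow from the same elementary inequality.

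For part~\ref{L0ii} I would use the rescaled form $\JKL'(\vartheta)=-S\kappa\int_0^1 x^{\kappa-1}\tfrac{Sx^\kappa-(S+h)\psi(\delta x-w)}{Sx^\kappa+\lambda_0}\,\dd x$, in which only $\psi(\delta x-w)$ depends on $\vartheta$. Differentiating in $w$ brings down $\psi'(\delta x-w)$, whose cusp factor $(x-w/\delta)^{\kappa-1}$ restricted to the overlap $\{w/\delta<x<1+w/\delta\}$ gives exactly the two stated integrals for $\JKL''$. For $\vartheta\ne\vartheta_0$ the singularities at $x=0$ and $x=w/\delta$ are separated, each of order $\kappa-1>-1$, so differentiation under the integral is justified and $\JKL''$ is finite and positive; at $\vartheta=\vartheta_0$ they coalesce into $x^{2\kappa-2}$, non-integrable at the origin because $\kappa<1/2$, whence $\JKL''(\vartheta_0)=+\infty$. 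Positivity of $\JKL''$ on $(\vartheta_0-\delta,\vartheta_0)\cup(\vartheta_0,\vartheta_0+\delta)$ yields the strict monotonicity of $\JKL'$ there.

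Parts~\ref{L0iii}--\ref{L0iv} are then soft consequences. By~\ref{L0i}--\ref{L0ii}, $\JKL'$ is continuous, constant and strictly negative on $(\alpha-\delta,\vartheta_0-\delta]$, strictly increasing on $[\vartheta_0-\delta,\vartheta_0+\delta]$, and constant on $[\vartheta_0+\delta,\beta+\delta)$; the right-hand value $\lambda_+\ln(1+S/\lambda_0)-S$ is positive precisely when $h\in\mathcal{H}$. Hence $\JKL'$ crosses zero exactly once, at a point $\hat\vartheta\in(\vartheta_0-\delta,\vartheta_0+\delta)\subset\Theta$ (this is where the widening of $\Theta$ is used), which is therefore the unique global minimizer, and the sign of $\JKL'(\vartheta_0)=Ah$ places $\hat\vartheta$ on the correct side of $\vartheta_0$. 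For~\ref{L0iv}, since $h\ne0$ forces $\hat\vartheta\ne\vartheta_0$, on a small neighbourhood of $\hat\vartheta$ avoiding $\vartheta_0$ the function $\JKL'$ is $C^1$ with $\JKL'(\hat\vartheta)=0$ and $\JKL''(\hat\vartheta)\in(0,\infty)$, giving two-sided linear bounds by the mean value theorem; away from $\hat\vartheta$ one uses that $\JKL'$ is bounded and bounded away from zero (unique zero, definite signs) together with the boundedness of $\Theta$ to extend~\eqref{borneJ'} to all of $\Theta$, and integrating~\eqref{borneJ'} from $\hat\vartheta$ gives~\eqref{borneJ}. The one genuinely delicate step is part~\ref{L0ii}: justifying differentiation under the integral across the cusp and correctly identifying the blow-up $\JKL''(\vartheta_0)=+\infty$ produced by the two coalescing $x^{\kappa-1}$ singularities when $\kappa<1/2$.
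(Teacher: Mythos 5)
Your proposal is correct and reaches exactly the formulas of the statement, but it is organized differently from the paper's proof. The paper never writes a closed-form Leibniz expression for $\JKL'$: it first computes $\JKL(\vartheta)$ itself explicitly in each of the four regimes (splitting the time integral at $\vartheta$, $\vartheta_0$, $\vartheta+\delta$, $\vartheta_0+\delta$, changing variables so that the \emph{real} intensity becomes a fixed function $\phi(y)$), and only then differentiates these explicit expressions, watching the boundary terms cancel by hand; the second derivative is obtained by differentiating $I_1$, $I_2$ once more. You instead differentiate the Kullback--Leibler integral directly, observing that the boundary contribution at the moving lower limit vanishes because $\lambda(\vartheta,\vartheta)=\lambda_*(\vartheta_0,\vartheta)=\lambda_0$, which yields the single representation $\JKL'(\vartheta)=-S\kappa\int_0^1 x^{\kappa-1}\bigl[Sx^\kappa-(S+h)\psi(\delta x-w)\bigr]/(Sx^\kappa+\lambda_0)\,\dd x$ with $w=\vartheta_0-\vartheta$, from which all four cases, the value $Ah$ at $\vartheta_0$, and the two formulas for $\JKL''$ (including the $x^{2\kappa-2}$ blow-up at $w=0$) follow by specializing $\psi$; this is more economical and makes the structure of $\JKL''$ transparent. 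The one point where you are slightly glib — and where the paper is equally informal — is the justification of differentiation under the integral when the differentiated integrand has a moving integrable singularity (at $t=\vartheta$ of order $\kappa-1$ for $\JKL'$, at $x=w/\delta$ for $\JKL''$): a pointwise dominating function uniform in $\vartheta$ does not exist, so one should invoke an absolute-continuity/Fubini argument (integrate the candidate derivative, which is continuous in $\vartheta$ away from $\vartheta_0$, and check it recovers the increments). Parts \ref{L0iii} and \ref{L0iv} of your argument coincide with the paper's: sign analysis plus strict monotonicity on $(\vartheta_0-\delta,\vartheta_0+\delta)$ for uniqueness of $\hat\vartheta$, the mean value theorem near $\hat\vartheta$ where $0<\JKL''<\infty$, extension of \eqref{borneJ'} to all of $\Theta$ by boundedness and monotonicity of $\JKL'$, and integration to get \eqref{borneJ}.
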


\begin{proof}
Throughout the proof, $C$ denotes a ``generic'' quantity not depending
on~$\vartheta$, which can vary from formula to formula (and even in the same
formula).  Note also that since we have supposed $\vartheta_0\in\Theta_0 =
(\alpha,\beta)$, we have $\vartheta_0-\delta,\vartheta_0+\delta \in \Theta =
(\alpha-\delta,\beta+\delta)$.

For $\vartheta\leq\vartheta_0-\delta$, we can write
\begin{align*}
\JKL(\vartheta) &= \int_{\vartheta}^{\vartheta+\delta}
\biggl[\frac{S \bigl(\frac{t-\vartheta}{\delta}\bigr)^\kappa +
    \lambda_0}{\lambda_0} - 1 - \ln\biggl(\frac{S
    \bigl(\frac{t-\vartheta}{\delta}\bigr)^\kappa +
    \lambda_0}{\lambda_0}\biggr)\biggr] \lambda_0\, \mathrm{d}t\\*
&\phantom{{}=} + \int_{\vartheta+\delta}^{\vartheta_0} \biggl[\frac{S +
    \lambda_0}{\lambda_0} - 1 - \ln\biggl(\frac{S +
    \lambda_0}{\lambda_0}\biggr)\biggr] \lambda_0\, \mathrm{d}t + C\\
&= \delta \int_{0}^{1} \biggl[S x^\kappa - \lambda_0\ln
  \biggl(1+\frac{S}{\lambda_0}\, x^\kappa\biggr)\biggr] \mathrm{d}x +
(\vartheta_0-\vartheta-\delta) \biggl[S - \lambda_0
  \ln\biggl(1+\frac{S}{\lambda_0}\biggr)\biggr] + C\\*
&= \vartheta\biggl[\lambda_0 \ln\biggl(1+\frac{S}{\lambda_0}\biggr) -
  S\biggr] + C.
\end{align*}
Hence, in this case, $\JKL'(\vartheta) =
\lambda_0\ln\bigl(1+\frac{S}{\lambda_0}\bigr) - S$ and $\JKL''(\vartheta) =
0$.  The fact that $\JKL'(\vartheta)<0$ follows immediately from the
elementary inequality $\ln(x) < x-1$ for~$x\ne 0$.

Similarly, for $\vartheta\geq\vartheta_0+\delta$, we have
\begin{align*}
\JKL(\vartheta) &= \int_{\vartheta_0+\delta}^{\vartheta}
\biggl[\frac{\lambda_0}{\lambda_+} - 1 -
  \ln\biggl(\frac{\lambda_0}{\lambda_+}\biggr)\biggr] \lambda_+\,
\mathrm{d}t\\*
&\phantom{{}=} + \int_{\vartheta}^{\vartheta+\delta} \biggl[\frac{S
    \bigl(\frac{t-\vartheta}{\delta}\bigr)^\kappa + \lambda_0}{\lambda_+} - 1
  - \ln\biggl(\frac{S \bigl(\frac{t-\vartheta}{\delta}\bigr)^\kappa +
    \lambda_0}{\lambda_+}\biggr)\biggr] \lambda_+\, \mathrm{d}t\\*
&\phantom{{}=} + \int_{\vartheta+\delta}^{\tau} \biggl[\frac{S +
    \lambda_0}{\lambda_+} - 1 - \ln\biggl(\frac{S +
    \lambda_0}{\lambda_+}\biggr)\biggr] \lambda_+\, \mathrm{d}t + C\\
&= (\vartheta-\vartheta_0-\delta) \biggl[-S - h - \lambda_+
  \ln\biggl(\frac{\lambda_0}{\lambda_+}\biggr)\biggr]\\*
&\phantom{{}=} + \delta \int_{0}^{1} \biggl[S x^\kappa - S - h - \lambda_+
  \ln\biggl(\frac{S x^\kappa +\lambda_0}{\lambda_+}\biggr)\biggr]
\mathrm{d}x\\*
&\phantom{{}=} + (\tau-\vartheta-\delta) \biggl[-h - \lambda_+
  \ln\biggl(\frac{S+\lambda_0}{\lambda_+}\biggr)\biggr] + C\\*
&= \vartheta\biggl[\lambda_+ \ln\biggl(1+\frac{S}{\lambda_0}\biggr) - S\biggr]
+ C.
\end{align*}
Hence, in this case, $\JKL'(\vartheta) =
\lambda_+\ln\bigl(1+\frac{S}{\lambda_0}\bigr) - S$ and $\JKL''(\vartheta) =
0$.  The fact that $\JKL'(\vartheta)>0$ follows immediately from the condition
$h\in\mathcal{H}$.

Now, in the case $\vartheta\in[\vartheta_0-\delta,\vartheta_0]$, denoting for
shortness $\phi(y)=(S+h) y^\kappa + \lambda_0$, it comes
\begin{align*}
\JKL(\vartheta) &= \int_{\vartheta}^{\vartheta_0} \biggl[\frac{S
    \bigl(\frac{t-\vartheta}{\delta}\bigr)^\kappa + \lambda_0}{\lambda_0} - 1
  - \ln\biggl(\frac{S \bigl(\frac{t-\vartheta}{\delta}\bigr)^\kappa +
    \lambda_0}{\lambda_0}\biggr)\biggr] \lambda_0\, \mathrm{d}t\\*
&\phantom{{}=} + \int_{\vartheta_0}^{\vartheta+\delta} \biggl[\frac{S
    \bigl(\frac{t-\vartheta}{\delta}\bigr)^\kappa +
    \lambda_0}{\phi\bigl(\frac{t-\vartheta_0}{\delta}\bigr)} - 1 -
  \ln\biggl(\frac{S \bigl(\frac{t-\vartheta}{\delta}\bigr)^\kappa +
    \lambda_0}{\phi\bigl(\frac{t-\vartheta_0}{\delta}\bigr)}\biggr)\biggr]
\phi\biggl(\frac{t-\vartheta_0}{\delta}\biggr)\, \mathrm{d}t\\*
&\phantom{{}=} + \int_{\vartheta+\delta}^{\vartheta_0+\delta} \biggl[\frac{S +
    \lambda_0}{\phi\bigl(\frac{t-\vartheta_0}{\delta}\bigr)} - 1 -
  \ln\biggl(\frac{S +
    \lambda_0}{\phi\bigl(\frac{t-\vartheta_0}{\delta}\bigr)}\biggr)\biggr]
\phi\biggl(\frac{t-\vartheta_0}{\delta}\biggr)\, \mathrm{d}t + C\\
&= \delta \int_{0}^{\frac{\vartheta_0-\vartheta}{\delta}} \biggl[S x^\kappa -
  \lambda_0 \ln\biggl(1+\frac{S}{\lambda_0}\, x^\kappa\biggr)\biggr]
\mathrm{d}x\\*
&\phantom{{}=} + \delta \int_{0}^{1-\frac{\vartheta_0-\vartheta}{\delta}}
\biggl[S \biggl(y+\frac{\vartheta_0-\vartheta}{\delta}\biggr)^\kappa - (S+h)
  y^\kappa - \phi(y) \ln\biggl(\frac{S
    \bigl(y+\frac{\vartheta_0-\vartheta}{\delta}\bigr)^\kappa +
    \lambda_0}{\phi(y)}\biggr)\biggr] \mathrm{d}y\\*
&\phantom{{}=} + \delta \int_{1-\frac{\vartheta_0-\vartheta}{\delta}}^{1}
\biggl[S - (S+h) y^\kappa - \phi(y) \ln\biggl(\frac{S +
    \lambda_0}{\phi(y)}\biggr)\biggr] \mathrm{d}y + C.
\end{align*}
Therefore, differentiating with respect to $\vartheta$, we get in this case
\begin{align*}
\JKL'(\vartheta) &= -\biggl[S
  \biggl(\frac{\vartheta_0-\vartheta}{\delta}\biggr)^\kappa - \lambda_0
  \ln\biggl(1+\frac{S}{\lambda_0}
  \biggl(\frac{\vartheta_0-\vartheta}{\delta}\biggr)^\kappa\biggr)\biggr]\\*
&\phantom{{}=} + \biggl[S - (S+h)
  \biggl(1-\frac{\vartheta_0-\vartheta}{\delta}\biggr)^\kappa -
  \phi\biggl(1-\frac{\vartheta_0-\vartheta}{\delta}\biggr) \ln\biggl(\frac{S +
    \lambda_0}{\phi\bigl(1-\frac{\vartheta_0-\vartheta}{\delta}\bigr)}\biggr)\biggr]\\*
&\phantom{{}=} + \delta \int_{0}^{1-\frac{\vartheta_0-\vartheta}{\delta}}
\biggl[-\frac{S \kappa}{\delta}
  \biggl(y+\frac{\vartheta_0-\vartheta}{\delta}\biggr)^{\kappa-1} + \phi(y)\,
  \frac{\frac{S \kappa}{\delta}
    \bigl(y+\frac{\vartheta_0-\vartheta}{\delta}\bigr)^{\kappa-1}} {S
    \bigl(y+\frac{\vartheta_0-\vartheta}{\delta}\bigr)^\kappa +
    \lambda_0}\biggr] \mathrm{d}y\\*
&\phantom{{}=} - \biggl[S - (S+h)
  \biggl(1-\frac{\vartheta_0-\vartheta}{\delta}\biggr)^\kappa -
  \phi\biggl(1-\frac{\vartheta_0-\vartheta}{\delta}\biggr) \ln\biggl(\frac{S +
    \lambda_0}{\phi\bigl(1-\frac{\vartheta_0-\vartheta}{\delta}\bigr)}\biggr)\biggr]\\
&= \lambda_0 \ln\biggl(1+\frac{S}{\lambda_0}
\biggl(\frac{\vartheta_0-\vartheta}{\delta}\biggr)^\kappa\biggr) - S
\biggl(\frac{\vartheta_0-\vartheta}{\delta}\biggr)^\kappa\\*
&\phantom{{}=} + \int_{\frac{\vartheta_0-\vartheta}{\delta}}^{1} \biggl[-S
  \kappa x^{\kappa-1} +
  \phi\biggl(x-\frac{\vartheta_0-\vartheta}{\delta}\biggr)\, \frac{S \kappa
    x^{\kappa-1}} {S x^\kappa + \lambda_0}\biggr] \mathrm{d}x\\
&= \lambda_0 \ln\biggl(1+\frac{S}{\lambda_0}
\biggl(\frac{\vartheta_0-\vartheta}{\delta}\biggr)^\kappa\biggr) - S
\biggl(\frac{\vartheta_0-\vartheta}{\delta}\biggr)^\kappa\\*
&\phantom{{}=} + \int_{\frac{\vartheta_0-\vartheta}{\delta}}^{1} S \kappa
x^{\kappa-1}\, \frac{(S+h)
  \bigl(x-\frac{\vartheta_0-\vartheta}{\delta}\bigr)^\kappa - S x^\kappa} {S
  x^\kappa + \lambda_0}\; \mathrm{d}x\\
&= \lambda_0 \ln\biggl(1+\frac{S}{\lambda_0}
\biggl(\frac{\vartheta_0-\vartheta}{\delta}\biggr)^\kappa\biggr) - S
\biggl(\frac{\vartheta_0-\vartheta}{\delta}\biggr)^\kappa + I_1(\vartheta).
\end{align*}

For $\vartheta\in[\vartheta_0-\delta,\vartheta_0)$, differentiating once more
  with respect to $\vartheta$, we obtain
\begin{align*}
\JKL''(\vartheta) &= \frac{-\frac{S \kappa}{\delta}
  \bigl(\frac{\vartheta_0-\vartheta}{\delta}\bigr)^{\kappa-1}
}{1+\frac{S}{\lambda_0}
  \bigl(\frac{\vartheta_0-\vartheta}{\delta}\bigr)^\kappa} + \frac{S
  \kappa}{\delta}
\biggl(\frac{\vartheta_0-\vartheta}{\delta}\biggr)^{\kappa-1} +
I_1'(\vartheta)\\*
 &= \frac{S \kappa}{\delta}
\biggl(\frac{\vartheta_0-\vartheta}{\delta}\biggr)^{\kappa-1} \biggl[1 -
  \frac{1}{1+\frac{S}{\lambda_0}
    \bigl(\frac{\vartheta_0-\vartheta}{\delta}\bigr)^\kappa}\biggr] + \frac{S
  \kappa}{\delta}
\biggl(\frac{\vartheta_0-\vartheta}{\delta}\biggr)^{\kappa-1}\, \frac{- S
  \bigl(\frac{\vartheta_0-\vartheta}{\delta}\bigr)^\kappa} {S
  \bigl(\frac{\vartheta_0-\vartheta}{\delta}\bigr)^\kappa + \lambda_0}\\*
&\phantom{{}=} + \int_{\frac{\vartheta_0-\vartheta}{\delta}}^{1} S \kappa
x^{\kappa-1}\, \frac{\frac{(S+h)\kappa}{\delta}
  \bigl(x-\frac{\vartheta_0-\vartheta}{\delta}\bigr)^{\kappa-1}} {S x^\kappa +
  \lambda_0}\; \mathrm{d}x\\*
&= \frac{S(S+h)\kappa^2}{\delta}
\int_{\frac{\vartheta_0-\vartheta}{\delta}}^{1} \frac{x^{\kappa-1}
  \bigl(x-\frac{\vartheta_0-\vartheta}{\delta}\bigr)^{\kappa-1}} {S x^\kappa +
  \lambda_0}\; \mathrm{d}x.
\end{align*}
Note that for $\vartheta\in(\vartheta_0-\delta,\vartheta_0)$ the last integral
is strictly positive, and that for $\vartheta=\vartheta_0$ it would become
$\int_{0}^{1} \frac{x^{2\kappa-2}} {S x^\kappa + \lambda_0}\, \mathrm{d}x$ and
diverge to $+\infty$ (since $2\kappa-2<-1$).

The calculation of $\JKL'(\vartheta)$ and $\JKL''(\vartheta)$ in the remaining
case $\vartheta\in[\vartheta_0,\vartheta_0+\delta]$ can be carried out in a
similar way.

So, to conclude the proof of the parts~\ref{L0i} and~\ref{L0ii} of the lemma,
it remains to show that~$\JKL'(\vartheta_0)=Ah$.  Indeed,
\begin{align*}
\JKL'(\vartheta_0) &= I_1(\vartheta_0) = I_2(\vartheta_0) =
\int_{0}^{1} S \kappa x^{\kappa-1}\, \frac{h x^\kappa} {S x^\kappa +
  \lambda_0}\; \mathrm{d}x = \int_{0}^{1} \frac{S h y}{S y + \lambda_0}\;
\mathrm{d}y\\*
&= h \int_{0}^{1}
\biggl[1 - \frac{\lambda_0}{S y + \lambda_0}\biggr] \mathrm{d}y = h \biggl[y -
  \frac{\lambda_0}{S} \ln\biggl(1 + \frac{S}{\lambda_0} y\biggr)\biggr]_0^1 =
h \biggl[1 - \frac{\lambda_0}{S} \ln\biggl(1 +
  \frac{S}{\lambda_0}\biggr)\biggr].
\end{align*}

The parts~\ref{L0iii} of the lemma follows directly from the parts~\ref{L0i}
and~\ref{L0ii}.  So, it remains to prove the part~\ref{L0iv}.

As $h \ne 0$, we have $\hat\vartheta \ne \vartheta_0$, and hence
$\JKL''(\hat\vartheta) > 0$.  So, there exist some vicinity of $\hat\vartheta$
and some constants~$m,M>0$, such that we have $m < \JKL''(\vartheta) < M$ for
$\vartheta$ belonging to this vicinity.  Hence, as
\[
\bil|\JKL'(\vartheta)\bir| = \bil|\JKL'(\vartheta) -
\JKL'(\hat\vartheta)\bir| = \JKL''(\tilde\vartheta)
\bil|\vartheta-\hat\vartheta\bir|,
\]
where $\tilde\vartheta$ is some intermediate value between $\vartheta$ and
$\hat\vartheta$, the estimates~\eqref{borneJ'} are valid for $\vartheta$
belonging to this vicinity. Noting that the function $\JKL'(\cdot)$ is
non-decreasing and bounded, this inequalities can be clearly extended to the
whole~$\Theta$ by adjusting the constants $m$ and~$M$.

The estimates~\eqref{borneJ} follow easily from the
estimates~\eqref{borneJ'}. For example, the upper estimate in the case
$\vartheta<\hat\vartheta$ can be obtained as follows
\[
\JKL(\vartheta)-\JKL(\hat\vartheta) =
-\int_{\vartheta}^{\hat\vartheta} \JKL'(t)\, \mathrm{d}t =
\int_{\vartheta}^{\hat\vartheta} \bil|\JKL'(t)\bir|\, \mathrm{d}t
\leq M \int_{\vartheta}^{\hat\vartheta} (\hat\vartheta-t)\, \mathrm{d}t =
\frac{M}{2}\, (\hat\vartheta-\vartheta)^2.
\]
The proposition  is proved.
\end{proof}

Now we turn to the proof of the lemmas.

\begin{proof}[Proof of Lemma~\ref{L1}]
Let us  note that the theoretical intensity function can be rewritten as
\[
\lambda(\vartheta,t) = S\, \biggl(\frac{t-\vartheta}{\delta}\biggr)^\kappa
\1_{\{0<t-\vartheta<\delta\}} + S\, \1_{\{t-\vartheta \geq\delta\}} +
\lambda_0 = S\, \biggl(\frac{t-\vartheta}{\delta}\biggr)_+^\kappa +
\tilde\psi(t-\vartheta),
\]
where the function
\[
\tilde\psi(x) = S \Bigl[1 - \Bigl(\frac{x}{\delta}\Bigr)^\kappa\Bigr]
\1_{\{x\geq\delta\}} + \lambda_0,\qquad x\in\RR,
\]
is Lipschitz continuous:
\[
\bigl|\tilde\psi(x) - \tilde\psi(y)\bigr| \leq C \bil|x-y\bir|,\qquad
x,y\in\RR,
\]
with $C = \bigl|{\tilde\psi}'(\delta)\bigr| = \frac{S\kappa}{\delta}\,$.

Denoting $\vartheta_u = \hat\vartheta + \varphi_n u$ and 
\[
W_n(t) = \frac{1}{\sqrt{n}} \sum_{j=1}^{n}\Biggl[X_j(t) - \int_{0}^{t}
  \lambda_*(\vartheta_0,s)\, \dd s\Biggr],
\]
we can write
\begin{align*}
\ln\hat Z_n(u) &= \varepsilon_n \sum_{j=1}^{n} \int_{0}^{\tau}
\ln\biggl(\frac{\lambda(\vartheta_u,t)}{\lambda(\hat\vartheta,t)} \biggr)\,
\dd X_j(t) - n\varepsilon_n \int_{0}^{\tau} \bigl[\lambda(\vartheta_u,t) -
  \lambda(\hat\vartheta,t)\bigr]\, \dd t\\*
&= \varepsilon_n \sum_{j=1}^{n} \int_{0}^{\tau}
\ln\biggl(\frac{\lambda(\vartheta_u,t)}{\lambda(\hat\vartheta,t)} \biggr)
\bigl[\dd X_j(t) - \lambda_*(\vartheta_0,t) \dd t\bigr]\\*
&\phantom{={}} - n\varepsilon_n \int_{0}^{\tau} \biggl[\lambda(\vartheta_u,t)
  - \lambda(\hat\vartheta,t) - \lambda_*(\vartheta_0,t)
  \ln\biggl(\frac{\lambda(\vartheta_u,t)}{\lambda(\hat\vartheta,t)} \biggr)
  \biggr] \dd t\\
&= \varepsilon_n \sqrt{n} \int_{0}^{\tau}
\ln\biggl(\frac{\lambda(\vartheta_u,t)}{\lambda(\hat\vartheta,t)} \biggr)\,
\dd W_n(t) - n\varepsilon_n \bigl[\JKL(\vartheta_u) - \JKL(\hat\vartheta)\bigr]\\*
&= A_n(u) - B_n(u)
\end{align*}
with evident notations.

For $B_n(u)$, we have
\[
B_n(u) = n\varepsilon_n \bigl[\JKL(\hat\vartheta+u\varphi_n) -
  \JKL(\hat\vartheta)\bigr] = n\varepsilon_n\, \frac{\JKL''(\hat\vartheta)}2\,
(u\varphi_n)^2 + o(n\varepsilon_n\varphi_n^2) = \frac{u^2}{2} + o(1).
\]
Here we used the Taylor expansion of the function $\JKL$ in the vicinity of
the point $\hat\vartheta$, the fact that $\JKL'(\hat\vartheta) = 0$, and the
equality $n\varepsilon_n\varphi_n^2\JKL''(\hat\vartheta) = 1$.

For $A_n(u)$, using the Taylor expansion of the function
$x\longmapsto\ln(1+x)$, we get
\begin{align*}
A_n(u) &= \varepsilon_n \sqrt{n} \int_{0}^{\tau}
\ln\biggl(\frac{\lambda(\hat\vartheta+u\varphi_n,t)}{\lambda(\hat\vartheta,t)}
\biggr)\, \dd W_n(t)\\*
&= \varepsilon_n \sqrt{n} \int_{0}^{\tau}
\frac{\lambda(\hat\vartheta+u\varphi_n,t) -
  \lambda(\hat\vartheta,t)}{\lambda(\hat\vartheta,t)}\,\, \dd W_n(t)\,
\bigl(1+o_\Pb(1)\bigr)\\
&= \varepsilon_n \sqrt{n} \int_{0}^{\tau}
\frac{S\,\bigl(\frac{t-\hat\vartheta-u\varphi_n}{\delta}\bigr)_+^\kappa -
  S\,\bigl(\frac{t-\hat\vartheta}{\delta}\bigr)_+^\kappa}{\lambda(\hat\vartheta,t)}\,\,
\dd W_n(t)\, \bigl(1+o_\Pb(1)\bigr)\\
&\phantom{={}} + \varepsilon_n \sqrt{n} \int_{0}^{\tau}
\frac{\tilde\psi(t-\hat\vartheta-u\varphi_n) -
  \tilde\psi(t-\hat\vartheta)}{\lambda(\hat\vartheta,t)}\,\, \dd W_n(t)\,
\bigl(1+o_\Pb(1)\bigr).
\end{align*}

Taking into account the Lipschitz continuity of $\tilde\psi$, the inequality
$\lambda(\hat\vartheta,t)\geq\lambda_0$ and the fact that $\varepsilon_n
\sqrt{n}\, \varphi_n \longrightarrow 0$, the last term clearly converges to
zero in probability.

So, $A_n(u)$ has the same limit as
\begin{align*}
\tilde A_n(u) &= \varepsilon_n \sqrt{n} \int_{0}^{\tau}
\frac{S\,\bigl(\frac{t-\hat\vartheta-u\varphi_n}{\delta}\bigr)_+^\kappa -
  S\,\bigl(\frac{t-\hat\vartheta}{\delta}\bigr)_+^\kappa}{\lambda(\hat\vartheta,t)}\,\,
\dd W_n(t)\\*
&= \varepsilon_n \sqrt{n}\, \varphi_n^{\kappa+1/2}\,
\frac{S\sqrt{\lambda_*(\vartheta_0,\hat\vartheta)}}{\delta^\kappa}
\int_{-\hat\vartheta/\varphi_n}^{(\tau-\hat\vartheta)/\varphi_n}
\frac{(v-u)_+^\kappa -
  v_+^\kappa}{\lambda(\hat\vartheta,\hat\vartheta+v\varphi_n)}\,\, \dd
w_n(v)\\
&= \varepsilon_n \sqrt{n}\, \varphi_n^{\kappa+1/2}\,
\frac{S\sqrt{\lambda_*(\vartheta_0,\hat\vartheta)}}{\lambda_0\,
  \delta^\kappa}\,
\int_{-\hat\vartheta/\varphi_n}^{(\tau-\hat\vartheta)/\varphi_n}
\bigl[(v-u)_+^\kappa - v_+^\kappa\bigr]\, \dd w_n(v)\,
\bigl(1+o_\Pb(1)\bigr)\\*
&= \Gamma_\kappa^{-1}
\int_{-\hat\vartheta/\varphi_n}^{(\tau-\hat\vartheta)/\varphi_n}
\bigl[(v-u)_+^\kappa - v_+^\kappa\bigr]\, \dd w_n(v)\,
\bigl(1+o_\Pb(1)\bigr),
\end{align*}
where we used the change of variable $t=\hat\vartheta+v\varphi_n$ and denoted
\[
w_n(v)=\frac{W_n(\hat\vartheta+v\varphi_n) -
  W_n(\hat\vartheta)}{\sqrt{\lambda_*(\vartheta_0,\hat\vartheta) \varphi_n}}\,
.
\]

Therefore, noting that $w_n(\cdot) \Longrightarrow W(\cdot)$, where $W$ is a
two-sided Wiener process, we obtain
\[
A_n(u) \Longrightarrow \Gamma_\kappa^{-1} \int_{-\infty}^{+\infty}
\bigl[(v-u)_+^\kappa - v_+^\kappa\bigr]\, \dd W(v) = W^H(u),
\]
and hence
\[
\ln\hat Z_n(u) \Longrightarrow W^H(u) - \frac{u^2}{2} = \ln\hat Z(u),
\]
which yields the convergence of one-dimensional distributions of $\hat
Z_n(\cdot)$ to those of~$\hat Z(\cdot)$.  Clearly, the convergence of
multi-dimensional distributions equally holds.
\end{proof}

\begin{proof}[Proof of Lemma~\ref{L2}]
Throughout the proof, $c$ and $C$ denote ``generic'' strictly positive
constants, which can vary from formula to formula (and even in the same
formula).

Using the Taylor-Lagrange formula
\[
y^\varepsilon = 1 + \varepsilon\ln y + \frac{\varepsilon^2}{2}\, (\ln y)^2\,
y^{\gamma\varepsilon},
\]
where $y>0$ and $0<\gamma<1$, and denoting for shortness 
\[
D = \frac{\varepsilon_n^2}{8}\,
\biggl[\ln\biggl(\frac{\lambda(\vartheta_u,t)}{\lambda(\hat\vartheta,t)}\biggr)\biggr]^2
\biggl(\frac{\lambda(\vartheta_u,t)}{\lambda(\hat\vartheta,t)}\biggr)^{\gamma\varepsilon_n/2},
\]
we can write
\begin{align*}
\Ex\hat Z_n^{1/2}(u) &=  \Ex\exp\Biggl\{\frac{\varepsilon_n}{2}
\sum_{j=1}^{n} \int_{0}^{\tau}
\ln\biggl(\frac{\lambda(\vartheta_u,t)}{\lambda(\hat\vartheta,t)}\biggr)\,
\dd X_j(t) - n\, \frac{\varepsilon_n}{2} \int_{0}^{\tau}
\bigl[\lambda(\vartheta_u,t) \!-\! \lambda(\hat\vartheta,t)\bigr]\, \dd
t\Biggr\}\\*
&= \exp\Biggl\{n \int_{0}^{\tau} \biggl[
  \biggl(\frac{\lambda(\vartheta_u,t)}{\lambda(\hat\vartheta,t)}\biggr)^{\varepsilon_n/2}
  \!- 1\biggr]\lambda_*(\vartheta_0,t)\, \dd t - n\, \frac{\varepsilon_n}{2}
\int_{0}^{\tau} \bigl[\lambda(\vartheta_u,t) \!-\!
  \lambda(\hat\vartheta,t)\bigr]\, \dd t\Biggr\}\\
&= \exp\biggl\{n \int_{0}^{\tau} \biggl[ \frac{\varepsilon_n}{2}\,
  \ln\biggl(\frac{\lambda(\vartheta_u,t)}{\lambda(\hat\vartheta,t)}\biggr)
  \!+\!  D\biggr]\lambda_*(\vartheta_0,t)\, \dd t - n\,
\frac{\varepsilon_n}{2} \int_{0}^{\tau} \bigl[\lambda(\vartheta_u,t) \!-\!
  \lambda(\hat\vartheta,t)\bigr]\, \dd t\Biggr\}\\
&= \exp\biggl\{-n\, \frac{\varepsilon_n}{2} \bigl[\JKL(\vartheta_u) -
  \JKL(\hat\vartheta)\bigr] + n \int_{0}^{\tau} D\, \lambda_*(\vartheta_0,t)\,
\dd t\biggr\}\\*
&= \exp\bigl\{-F_n(u)+G_n(u)\bigr\}
\end{align*}
with evident notations.

For the first term, using~\eqref{borneJ}, we obtain
\[
-F_n(u) \leq -n\, \frac{\varepsilon_n}{2}\, \frac{m}{2}\,
(\vartheta_u-\hat\vartheta)^2 = -c\, u^2\, n\varepsilon_n\varphi_n^2 = -c\,
u^2.
\]

For the second term, we have
\begin{align}
G_n(u) &= n\, \frac{\varepsilon_n^2}{8} \int_{0}^{\tau}
\biggl[\ln\biggl(\frac{\lambda(\vartheta_u,t)}{\lambda(\hat\vartheta,t)}\biggr)\biggr]^2
\biggl(\frac{\lambda(\vartheta_u,t)}{\lambda(\hat\vartheta,t)}\biggr)^{\gamma\varepsilon_n/2}
\lambda_*(\vartheta_0,t)\, \dd t \notag\\*
&\leq C\, n\varepsilon_n^2 \int_{0}^{\tau}
\bigl[\ln\bigl(\lambda(\vartheta_u,t)\bigr) -
  \ln\bigl(\lambda(\hat\vartheta,t)\bigr)\bigr]^2\, \dd t \notag\\
&= C\, n\varepsilon_n^2 \int_{0}^{\tau} \biggl[\frac{\lambda(\vartheta_u,t) -
    \lambda(\hat\vartheta,t)}{\tilde\lambda}\biggr]^2\, \dd t \notag\\
& \leq C\, n\varepsilon_n^2 \int_{0}^{\tau} \bigl[\lambda(\vartheta_u,t) -
  \lambda(\hat\vartheta,t)\bigr]^2\, \dd t \notag\\
&\leq C\, n\varepsilon_n^2\,
\bil|\vartheta_u-\hat\vartheta\bir|^{2\kappa+1}\label{int2}\\*
& =C\, \bil|u\bir|^{2\kappa+1}\, n\varepsilon_n^2\varphi_n^{2\kappa+1} = C\,
\bil|u\bir|^{2\kappa+1}. \notag
\end{align}
Here $\tilde\lambda$ is some intermediate value between
$\lambda(\vartheta_u,t)$ and $\lambda(\hat\vartheta,t)$, we use the fact that
the intensities $\lambda$ and $\lambda_*$ are bounded and separated from zero,
and the inequality~\eqref{int2} is a particular case of the following more
general inequality (which will be also needed in the proof of the next lemma):
for any $p\geq 1$, there exist a constant $C$ such that
\begin{equation}
\label{int2p}
\int_{0}^{\tau} \bigl|\lambda(\vartheta_1,t) -
\lambda(\vartheta_2,t)\bigr|^{2p}\, \dd t \leq C\,
\bil|\vartheta_1-\vartheta_2\bir|^{2p\kappa+1}
\end{equation}
for all $\vartheta_1,\vartheta_2\in\Theta$.

Before continuing the proof of the lemma, let us prove the
inequality~\eqref{int2p}.  Without loss of generality we can suppose
that~$\vartheta_1>\vartheta_2$.  Using the elementary inequality
\begin{equation}
\label{ineqPow}
\bil|a+b\bir|^q \leq 2^{q-1} \bigl[\bil|a\bir|^q + \bil|b\bir|^q\bigr]
\end{equation}
(valid for all $a,b\in\RR$ and~$q\geq 1$), the Lipschitz continuity of
$\tilde\psi$ and the change of variable $t=\vartheta_2 + v\,
(\vartheta_1-\vartheta_2)$, we get
\begin{align*}
\int_{0}^{\tau} \bigl|\lambda(\vartheta_1,t) -
\lambda(\vartheta_2,t)\bigr|^{2p}\, \dd t &\leq C \int_{0}^{\tau} \biggl|S\,
\biggl(\frac{t-\vartheta_1}{\delta}\biggr)_+^\kappa - S\,
\biggl(\frac{t-\vartheta_2}{\delta}\biggr)_+^\kappa\biggr|^{2p}\, \dd t\\*
&\phantom{\leq{}} + C \int_{0}^{\tau} \bigl|\tilde\psi(t-\vartheta_1) -
\tilde\psi(t-\vartheta_2)\bigr|^{2p}\, \dd t\\
&\leq C \int_{0}^{\tau} \bigl|(t-\vartheta_1)_+^\kappa -
(t-\vartheta_2)_+^\kappa\bigr|^{2p}\, \dd t + C\,
(\vartheta_1-\vartheta_2)^{2p}\\*
&= C\, (\vartheta_1\!-\!\vartheta_2)^{2p\kappa+1}
\int_{-\frac{\vartheta_2}{\vartheta_1-\vartheta_2}}^{\frac{\tau-\vartheta_2}{\vartheta_1-\vartheta_2}}
\bigl|(v-1)_+^\kappa - v_+^\kappa\bigr|^{2p}\, \dd v + C\,
(\vartheta_1\!-\!\vartheta_2)^{2p}.
\end{align*}
As $\kappa<1/2$ and $p\geq 1$, we have
\[
\int_{-\frac{\vartheta_2}{\vartheta_1-\vartheta_2}}^{\frac{\tau-\vartheta_2}{\vartheta_1-\vartheta_2}}
\bigl|(v-1)_+^\kappa  - v_+^\kappa\bigr|^{2p}\, \dd v \leq
\int_{-\infty}^{+\infty} \bigl|(v-1)_+^\kappa - v_+^\kappa\bigr|^{2p}\, \dd v
< +\infty
\]
and (noting that $\vartheta_1-\vartheta_2 \leq \tau-\delta$ and $2p-2p\kappa-1 > 0$)
\[
C\, (\vartheta_1-\vartheta_2)^{2p} = C\,
(\vartheta_1-\vartheta_2)^{2p\kappa+1}
(\vartheta_1-\vartheta_2)^{2p-2p\kappa-1} \leq C\,
(\vartheta_1-\vartheta_2)^{2p\kappa+1},
\]
which yields the inequality~\eqref{int2p}.

Now, combining the bounds obtained for $-F_n(u)$ and $G_n(u)$, we have
\begin{equation}
\label{ineqL2bis}
\Ex \hat Z_n^{1/2}(u) \leq \exp\bigl\{-c u^2 + C
\bil|u\bir|^{2\kappa+1}\bigr\}.
\end{equation}
This concludes the proof of the lemma, since taking $c'=c/2$ and noting that
the function $-\frac{c}{2} u^2 + C \bil|u\bir|^{2\kappa+1}$ is bounded, we
obtain~\eqref{ineqL2}.

Note also that the moments $\Ex \hat Z_n^q(u)$, $u\in\UU_n$, of an arbitrary
order~$q>0$ can be bounded by the same inequalities~\eqref{ineqL2}
and~\eqref{ineqL2bis} (with constants depending on~$q$). Indeed, it is clear
from the proof above that only the order of the rate at which $\varepsilon_n
\longrightarrow 0$ is important, and so it is sufficient to apply the lemma to
$\tilde Z_n(u) = \bigl[Z_n(u)\bigr]^{\varepsilon'_n}$ with $\varepsilon'_n =
2q\varepsilon_n$, and note that $\tilde Z_n^{1/2}(u) =
\bigl[Z_n(u)\bigr]^{q\varepsilon_n} = \hat Z_n^q(u)$.  In particular, for any
$q>0$, there exist some constants~$c'=c'(q)>0$ and $C'=C'(q)>0$ such that
\begin{equation}
\label{ineqL2ter}
\Ex \hat Z_n^q(u) \leq C'\exp\bigl\{-c' u^2\bigr\}
\end{equation}
for all $n\in\NN$ and $u\in\UU_n$.
\end{proof}

\begin{proof}[Proof of Lemma~\ref{L3}]
Throughout the proof, once more $c$ and $C$ denote ``generic'' strictly
positive constants, which can vary from formula to formula (and even in the
same formula).

First of all, let us note that in the case $\bil|u_1-u_2\bir|\geq 1$,
using the inequality~\eqref{ineqL2ter}, we get
\[
\Ex \bigl[\hat Z_n^{1/2}(u_1) - \hat Z_n^{1/2}(u_2)\bigr]^2 \leq 2\, \Ex \hat
Z_n(u_1) + 2\, \Ex \hat Z_n(u_2) \leq C \leq C\, \bil|u_1 - u_2\bir|^\gamma.
\]
Hence, we can suppose from now on that $\bil|u_1-u_2\bir|\leq 1$ and, without
loss of generality, that $\bil|u_1\bir| \geq \bil|u_2\bir|$ (and, henceforth,
$\bil|u_1\bir| \leq \bil|u_2\bir|+1$).

Using the elementary inequality
\[
\bil|e^x - e^y\bir| \leq |x - y\bir|\, \max\{e^x,e^y\}
\]
(valid for all $x,y\in\RR$), we obtain
\[
\Ex \bigl[\hat Z_n^{1/2}(u_1) - \hat Z_n^{1/2}(u_2)\bigr]^2 \leq \Ex \Bigl(
\bigl|\ln\hat Z_n^{1/2}(u_1) - \ln\hat Z_n^{1/2}(u_2)\bigr|^2 \max\bigl\{\hat
Z_n(u_1),\hat Z_n(u_2)\bigr\}\Bigr).
\]

Now, let us fix some $p>1$ (the choice of $p$ will be precised later) and put
$q=\frac{p}{p-1}>1$ (so that $\frac1p+\frac1q=1$).  Using the H\"older
inequality, we can write
\begin{align*}
\Ex \bigl[\hat Z_n^{1/2}(u_1) - \hat Z_n^{1/2}(u_2)\bigr]^2 &\leq \Bigl[\Ex
\bigl|\ln\hat Z_n^{1/2}(u_1) - \ln\hat
Z_n^{1/2}(u_2)\bigr|^{2p}\Bigr]^{\frac1p}\, \Bigl[\Ex \max\bigl\{\hat
Z_n^q(u_1),\hat Z_n^q(u_2)\bigr\}\Bigr]^{\frac1q}\\*
&\leq \Bigl[\Bigl(\frac{\varepsilon_n}{2}\Bigr)^{2p}\, \Ex \bigl|\ln Z_n(u_1)
  - \ln Z_n(u_2)\bigr|^{2p}\Bigr]^{\frac1p}\, \Bigl[\Ex \bigl(\hat Z_n^q(u_1)
+ \hat Z_n^q(u_2)\bigr)\Bigr]^{\frac1q}\\
&\leq C \Bigl[\varepsilon_n^{2p}\, \Ex \bigl|\ln Z_n(u_1) - \ln
Z_n(u_2)\bigr|^{2p}\Bigr]^{\frac1p}\, \Bigl[e^{-c u_1^{2}} + e^{-c
  u_2^{2}}\Bigr]^{\frac1q}\\*
&\leq C e^{-c u_2^{2}}\, \Bigl[\varepsilon_n^{2p}\, \Ex \bigl|\ln Z_n(u_1) -
  \ln Z_n(u_2)\bigr|^{2p}\Bigr]^{\frac1p},
\end{align*}
where we used again the inequality~\eqref{ineqL2ter}.

Introducing a centered Poisson process of intensity function
$n\lambda_*(\vartheta_0,t)$, $t\in[0,\tau]$, by
\[
\pi_n(t) = \sum_{j=1}^{n} X_j(t) - n\int_{0}^{t} \lambda_*(\vartheta_0,s)\,
\dd s,
\]
we can write
\begin{align*}
\ln Z_n(u_1) - \ln Z_n(u_2) &= \sum_{j=1}^{n} \int_{0}^{\tau}
\ln\biggl(\frac{\lambda(\vartheta_{u_1},t)}{\lambda(\vartheta_{u_2},t)}
\biggr)\, \dd X_j(t) - n\int_{0}^{\tau} \bigl[\lambda(\vartheta_{u_1},t) -
  \lambda(\vartheta_{u_2},t)\bigr]\, \dd t\\*
&= \int_{0}^{\tau}
\ln\biggl(\frac{\lambda(\vartheta_{u_1},t)}{\lambda(\vartheta_{u_2},t)}
\biggr)\, \dd \pi_n(t)\\*
&\phantom{={}} - n \int_{0}^{\tau} \biggl[\lambda(\vartheta_{u_1},t) -
  \lambda(\vartheta_{u_2},t) - \lambda_*(\vartheta_0,t)
  \ln\biggl(\frac{\lambda(\vartheta_{u_1},t)}{\lambda(\vartheta_{u_2},t)}
  \biggr) \biggr] \dd t\\
&= \int_{0}^{\tau}
\ln\biggl(\frac{\lambda(\vartheta_{u_1},t)}{\lambda(\vartheta_{u_2},t)}
\biggr)\, \dd \pi_n(t) - n \bigl[\JKL(\vartheta_{u_1}) -
  \JKL(\vartheta_{u_2})\bigr]\\*
&= A_n(u_1,u_2) - B_n(u_1,u_2)
\end{align*}
with evident notations.  Therefore, using the inequality \eqref{ineqPow}, it
comes
\begin{align*}
\Ex \bigl[\hat Z_n^{1/2}(u_1) - \hat Z_n^{1/2}(u_2)\bigr]^2 &\leq C e^{-c
  u_2^{2}}\, \Bigl[\varepsilon_n^{2p}\, \Ex \bigl|A_n(u_1,u_2) -
  B_n(u_1,u_2)\bigr|^{2p}\Bigr]^{\frac1p}\\*
&\leq C e^{-c u_2^{2}}\, \Bigl[\varepsilon_n^{2p}\, \Ex
  \bigl|A_n(u_1,u_2)\bigr|^{2p} + \varepsilon_n^{2p}\,
  \bigl|B_n(u_1,u_2)\bigr|^{2p}\Bigr]^{\frac1p}.
\end{align*}

For the term containing $B_n(u_1,u_2)$, using the mean value theorem and the
upper bound of~\eqref{borneJ'}, we get
\begin{align*}
\varepsilon_n^{2p}\, \bigl|B_n(u_1,u_2)\bigr|^{2p} &= \Bigl| n\varepsilon_n
\bigl[\JKL(\vartheta_{u_1}) - \JKL(\vartheta_{u_2})\bigr]\Bigr|^{2p} = \bigl|
n\varepsilon_n\, (\vartheta_{u_1}-\vartheta_{u_2})\, \JKL'(\vartheta_{\tilde
  u})\bigr|^{2p}\\*
&\leq C\, \bigl| n\varepsilon_n\varphi_n\, (u_1-u_2)\, (\vartheta_{\tilde
  u}-\hat\vartheta)\bigr|^{2p} = C\, \bigl| n\varepsilon_n\varphi_n^2\,
(u_1-u_2)\, \tilde u\bigr|^{2p}\\*
&\leq C\, \bil|u_1-u_2\bir|^{2p}\,
\bigl(\max\{\bil|u_1\bir|,\bil|u_2\bir|\}\bigr)^{2p} \leq C\,
\bil|u_1-u_2\bir|^{2p}\, (1+\bil|u_2\bir|)^{2p}.
\end{align*}
Here $\tilde u$ is some intermediate value between $u_1$ and $u_2$.

For the term containing $A_n(u_1,u_2)$, using Rosenthal's inequality (see, for
example,~\cite{Wood99}) and proceeding similarly as while bounding $G_n(u)$ in
the proof of the previous lemma, we have
\begin{align*}
\varepsilon_n^{2p}\, \Ex \bigl|A_n(u_1,u_2)\bigr|^{2p} &\leq C\,
\varepsilon_n^{2p}\, \biggl(n \int_{0}^{\tau}
\biggl[\ln\biggl(\frac{\lambda(\vartheta_{u_1},t)}{\lambda(\vartheta_{u_2},t)}\biggr)\biggr]^2
\lambda_*(\vartheta_0,t)\, \dd t\biggr)^p\\*
&\phantom{\leq{}} + C\, n\varepsilon_n^{2p}\, \int_{0}^{\tau}
\biggl|\ln\biggl(\frac{\lambda(\vartheta_{u_1},t)}{\lambda(\vartheta_{u_2},t)}\biggr)\biggr|^{2p}
\lambda_*(\vartheta_0,t)\, \dd t\\
&\leq C\, \biggl(n\varepsilon_n^2 \int_{0}^{\tau}
\bigl[\lambda(\vartheta_{u_1},t)-\lambda(\vartheta_{u_2},t)\bigr]^2\, \dd
t\biggr)^p\\*
&\phantom{\leq{}} + C\, n\varepsilon_n^{2p}\, \int_{0}^{\tau}
\bigl|\lambda(\vartheta_{u_1},t)-\lambda(\vartheta_{u_2},t)\bigr|^{2p}\, \dd
t\\
&\leq C\, \bigl(n\varepsilon_n^2\,
\bil|\vartheta_{u_1}-\vartheta_{u_2}\bir|^{2\kappa+1}\bigr)^p + C\,
n\varepsilon_n^{2p}\, \bil|\vartheta_{u_1}-\vartheta_{u_2}\bir|^{2p\kappa+1}\\*
&\leq C\, \bil|u_1-u_2\bir|^{(2\kappa+1)p} + C\,
\bil|u_1-u_2\bir|^{2p\kappa+1}.
\end{align*}
Here we equally used the inequality~\eqref{int2p}, the fact that
$n\varepsilon_n^2\varphi_n^{2\kappa+1}=C$ and the boundedness
of~$n\varepsilon_n^{2p}\varphi_n^{2p\kappa+1}
=o(n\varepsilon_n^2\varphi_n^{2\kappa+1}) = o(1)$.

So, finally, we obtain
\begin{align*}
\Ex \bigl[\hat Z_n^{1/2}(u_1) - \hat Z_n^{1/2}(u_2)\bigr]^2 &\leq C e^{-c
  u_2^{2}}\, \Bigl[ \bil|u_1-u_2\bir|^{(2\kappa+1)p} +
  \bil|u_1-u_2\bir|^{2p\kappa+1}\\
&\phantom{\leq C e^{-c u_2^{2}}\, \Bigl[} {} + \bil|u_1-u_2\bir|^{2p}\,
    (1+\bil|u_2\bir|)^{2p}\Bigr]^{\frac1p}\\
&\leq C e^{-c u_2^{2}}\, \bil|u_1-u_2\bir|^{2\kappa+\frac1p}\,
  (1+\bil|u_2\bir|)^2\\*
&\leq C\, \bil|u_1-u_2\bir|^{2\kappa+\frac1p},
\end{align*}
since the function $u \longmapsto e^{-c u^2}\, (1+\bil|u\bir|)^2$ is bounded.

To conclude the proof of the lemma, it remains to notice that choosing $p>1$
sufficiently close to~$1$, we can make $\gamma = 2\kappa+\frac1p < 2\kappa+1$
arbitrary close to $2\kappa+1$ and, in particular, strictly grater than~$1$.
\end{proof}

\section{Discussion}

Recall that if we have a cusp-type singularity of order $\kappa \in (0,1/2)$
and there is no misspecification, the mean square error of the MLE has the
following asymptotics (see~\cite{D03}):
\[
\Ex \bigl(\hat\vartheta_n-\vartheta\bigr)^2 = c\, n^{-\frac{2}{2\kappa+1}}
\bigl(1+o(1)\bigr).
\]
Therefore, the smaller is the value of $\kappa$, the better is the rate of
convergence. It is interesting to compare this rate with the rate of
convergence for the model with misspecification. According to
Theorem~\ref{T1}, the corresponding mean square error is
\[
\Ex \bigl(\hat\vartheta_n-\vartheta\bigr)^2 = c\, n^{-\frac{2}{3-\kappa}}
\bigl(1+o(1)\bigr),
\]
and so we have an opposite situation: the smaller is the value of $\kappa$,
the worse is the rate of convergence.

The plots of the rate exponents $\gamma = \frac{2}{2\kappa +1}$ and $\gamma =
\frac{2}{3-\kappa}$ with and without misspecification are given in
Fig.~\ref{F5}.  Note that for $\kappa > 1/2$ (regular case), the plotted value
is~$\gamma=1$, since in this case the mean square error goes to zero at
rate~$1/n$ both with and without misspecification (see~\cite{Kut22}).

\begin{figure}[H]
\begin{center}
\includegraphics[width=0.5\textwidth]{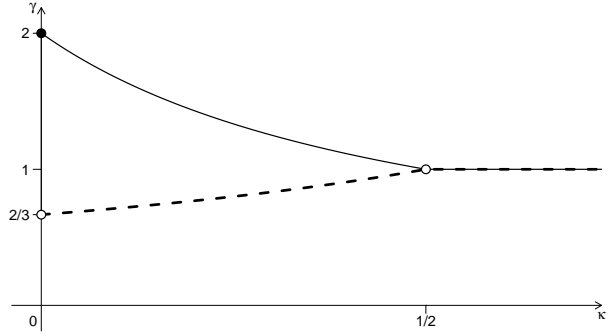}
\caption{Rate exponents $\gamma$ with (dashed line) and without (solid line)
  misspecification}
\label{F5}
\end{center}
\end{figure} 

The limit at $\kappa=0$ of the solid line corresponds well to the rate
exponent $\gamma=2$ of the change-point problem with discontinuous intensity
function (see~\cite{Kut98}). In the case of misspecification the situation is
essentially different. If the intensity function is discontinuous, then the
p-MLE converges to the true value (is consistent) and $\gamma = 2$
(see~\cite{Kut22}), while the limit at $\kappa=0$ of the dashed curve is
only~$2/3$.

%Note that this case ($\kappa=0$) was not included in the considered in this
%work study, but in the work~\cite{AD21} it was proved that the function
%$\gamma(\kappa)$, $\kappa \in \bigl(0,\frac12\bigr)$ converges to 2 as $\kappa
%\longrightarrow 0$.

Note that the case $\kappa=1/2$ was not included in this study. If there is no
misspecification and $\kappa =1/2$, we are in the \emph{almost smooth\/} case,
and the error is
\[
\Ex \bigl(\hat\vartheta _n-\vartheta\bigr)^2 = \frac{c}{n\ln n}\,
\bigl(1+o(1)\bigr)
\]
(see \cite{IH81,Kut22}). The properties of the p-MLE for the model with
misspecification and $\kappa =1/2$ were not yet studied. Of course, this can
be done with the help of the developed in this work approach.

Note also that it is possible to generalize the presented in this work results
to the case of non constant signals, i.e., when the theoretical and real
intensity functions of the observed inhomogeneous Poisson processes are given
by
\begin{align*}
\lambda(\vartheta,t) & = S(t)\, \psi(t-\vartheta)+\lambda _0,\\*
\lambda_* (\vartheta,t) &= S_*(t)\, \psi(t-\vartheta)+\lambda _0,
\end{align*}
with functions $S(\cdot)$ and $S_*(\cdot)$ satisfying the condition
\[
\inf_{t \in \Theta}\bigl|S(t)-S_*(t)\bigr|>0.
\]
The main difference will be in the proof of Proposition~\ref{P1}.

\bigskip
\noindent
\textbf{Acknowledgments.} This research was financially supported by the
Ministry of Education and Science of the Russian Federation project
no.~FSWF-2023-0012 (sections~1 and~2) and by the Russian Science Foundation
project no.~20-61-47043 (sections~3 and~4).

\end{document}